\newcommand{\R}{{\mathbb R}} 
\newcommand{\A}{\mathbf{A}}
\newcommand\norm[1]{\left\| #1\right\|}
\newcommand{\wei}[1]{\langle #1 \rangle}
\newcommand{\sgn}{\text{sgn}}
\newcommand{\F}{{\mathbf F}}
\newtheorem{theorem}{Theorem}[section]
\newtheorem{lemma}[theorem]{Lemma}
\numberwithin{equation}{section}
\newcommand{\beq}{\begin{equation}}
\newcommand{\eeq}{\end{equation}}
\definecolor{darkred}{rgb}{.70,.12,.20}
\definecolor{darkgreen}{rgb}{.20,.52,.14}
\title[Higher integrability estimates, elliptic equations with singular coefficients] {On higher integrability estimates for elliptic equations with singular coefficients}
\author{Juraj F\"{o}ldes and Tuoc Phan}
\address[J. F\"{o}ldes]{Department of Mathematics, University of Virginia, 141 Cabell Drive, Kerchof Hall
P.O. Box 400137, Charlottesville, VA 22904, U.S.A.}
\email{foldes@virginia.edu}
\address[T. Phan]{Department of Mathematics, University of Tennessee, Knoxville, 227 Ayres Hall, 1403 Circle Drive, Knoxville, TN 37996, U.S.A.}
\email{phan@utk.edu}
\begin{document}

\maketitle
\begin{abstract}  In this note 
we establish existence and uniqueness of weak solutions of linear elliptic equation $\text{div}[\A(x) \nabla u] = \text{div}{\F(x)}$, where the matrix $\A$ is just measurable and its skew-symmetric part can be unbounded. 
Global reverse H\"{o}lder's regularity estimates for gradients of weak solutions are also obtained. Most importantly, we show, by providing an example, that boundedness  and ellipticity of $\A$ is not sufficient for higher integrability estimates even when the symmetric part of $\A$ is the identity matrix.  In addition, 
the example also shows the necessity of the dependence of $\alpha$ in the H\"{o}lder $C^\alpha$-regularity theory on the \textup{BMO}-semi norm of the skew-symmetric part of $\A$. 
The paper is an extension of classical results obtained by N. G. Meyers (1963) in which the skew-symmetric part of $\A$ is assumed to be zero.
\end{abstract}

\section{Introduction}
Let $\Omega \subset \mathbb{R}^n$ be an open bounded  domain with Lipschitz boundary $\partial \Omega$. Let $\A :\Omega \rightarrow \mathbb{R}^{n\times n}$ be a given measurable matrix, and $\F: \Omega \rightarrow \R^n$ be a given measurable vector field. In this note, we study the linear elliptic problem
\begin{equation} \label{main-eqn}
\left\{
\begin{aligned}
\textup{div}[\A(x) \nabla u(x)] &=  \textup{div }  \F(x), && \quad \text{in} \quad \Omega, \\
u &=  0, && \quad \text{on} \quad \partial \Omega\,,
\end{aligned}
\right.
\end{equation}
where  the coefficient matrix $\A(x)$ could be non-symmetric and singular. Specifically, we write $\A(x) = a(x) + d(x)$, where $a = (a_{ij})_{n\times n}$ is a symmetric and $d = (d_{ij})_{n\times n}$ a skew-symmetric part of $\A$, that is, 
$a_{ij} = a_{ji}$ and $d_{ij}= - d_{ji}$ for all $i, j \in\{1,2,\cdots, n\}$.  We assume that $a$ is uniformly elliptic and bounded, meaning that  there exists a constant $\Lambda >0$ such that
\begin{equation} \label{ellipticity-cond}
\Lambda |\xi|^2 \leq \wei{a(x) \xi, \xi}, \quad |a(x)|  \leq \Lambda^{-1}, \quad \forall \ \xi \in \R^n, \quad \text{for a.e.} \quad  x \in \Omega.
\end{equation}
However,  the skew-symmetric part $d : \R^n \rightarrow \R^{n\times n}$ is only in the Jonh-Nirenberg \text{BMO}-space, and therefore $d$ can be unbounded. Precisely, we assume that
\begin{equation} \label{BMO-d}
[[d]]_{\text{BMO}} = \sup\left\{\fint_{B_\rho(x)} | d(y) - \overline{d}_{B_\rho(x)}| dy, \quad 
x \in \mathbb{R}^n, \quad  \rho >0 \right\} < \infty,
\end{equation}
where 
\begin{equation} \label{dav}
\overline{d}_{B_\rho(x)} = \fint_{B_\rho(x) } d(y) dy
\end{equation}
is the average of $d$ over the ball $B_\rho(x)$.
Observe that the differential equations in \eqref{main-eqn} can be formally written as an elliptic equation with drift (or advection term) 
\begin{equation} \label{drift}
\text{div}[a(x) \nabla u] + b \cdot \nabla u = \text{div}[\F(x)],
\end{equation}
where the vector field $b$ is defined by
\[
b_k(x) = \sum_{l=1}^n \frac{\partial d_{kl}(x)}{\partial x_l}.
\]
Since $(d_{kl})$ is skew-symmetric, the vector field $b$ is divergence-free. Due to the interests from many problems in fluid mechanics, biology, and probability, the class of equation \eqref{drift} has attracted great attention, see
for example \cite{Dong-Kim, KS, LP, LS, O, TP, TP1,  Sverak, Q-Zhang}.

The first result of this paper extends the classical results established in \cite{M} to the equations \eqref{main-eqn} and \eqref{drift}. 

\begin{theorem} \label{self-improvement} Let $\Lambda >0$ and assume that $\A$ satisfies \eqref{ellipticity-cond}-\eqref{BMO-d}. 
 Then, for every $\F \in L^2(\Omega)$, there exists a unique weak solution $u \in W^{1,2}_0(\Omega)$ of \eqref{main-eqn}, that satisfies 
\begin{equation} \label{energy-thm}
\int_{\Omega} |\nabla u(x)|^2 dx \leq C(\Lambda, [[d]]_{\textup{BMO}}) \int_{\Omega} |\F(x)|^2 dx.
\end{equation}
Moreover, there exists $\epsilon_0 > 0$ depending only on $\Lambda, n, \Omega$, and $[[d]]_{\textup{BMO}}$ such that if $u$ is a weak solution of \eqref{main-eqn} and $\F \in L^p(\Omega)$ for some $ p \in [2, 2 + \epsilon_0]$, then
\begin{equation} \label{reverse-Holder}
\norm{\nabla u}_{L^p(\Omega)} \leq C(\Lambda, n, \Omega, p, [[d]]_{\textup{BMO}}) \norm{\F}_{L^p(\Omega)}.
\end{equation}
\end{theorem}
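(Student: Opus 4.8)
The plan is to treat the two assertions of Theorem~\ref{self-improvement} in turn, in both cases extracting what is needed from the skew-symmetric part through its \textup{BMO} oscillation rather than through any pointwise bound.

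For existence, uniqueness and \eqref{energy-thm} I would work with the bilinear form
\[
B[u,v]=\int_{\Omega}\langle a(x)\nabla u,\nabla v\rangle\,dx+\int_{\Omega}\langle d(x)\nabla u,\nabla v\rangle\,dx,\qquad u,v\in W^{1,2}_0(\Omega),
\]
with functions extended by zero to $\R^n$. Coercivity is immediate: a skew-symmetric matrix has vanishing quadratic form, so $\langle d(x)\nabla u,\nabla u\rangle=0$ a.e.\ and hence $B[u,u]=\int_\Omega\langle a\nabla u,\nabla u\rangle\,dx\ge\Lambda\|\nabla u\|_{L^2(\Omega)}^2$ by \eqref{ellipticity-cond}. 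The substantive point is continuity of the second term. Using skew-symmetry to antisymmetrize, $\int_\Omega\langle d\nabla u,\nabla v\rangle=\tfrac12\sum_{i,j}\int_{\R^n}d_{ij}\,(\partial_j u\,\partial_i v-\partial_i u\,\partial_j v)\,dx$, and each factor $\partial_j u\,\partial_i v-\partial_i u\,\partial_j v$ can be written as a scalar product $\nabla u\cdot E$ with $E$ divergence-free and built linearly from $\nabla v$; by the div--curl lemma of Coifman--Lions--Meyer--Semmes it therefore lies in the Hardy space $\mathcal H^1(\R^n)$ with $\|\cdot\|_{\mathcal H^1}\le C(n)\|\nabla u\|_{L^2}\|\nabla v\|_{L^2}$, and pairing with $d$ via $\mathcal H^1$--\textup{BMO} duality gives $|\int_\Omega\langle d\nabla u,\nabla v\rangle|\le C(n)[[d]]_{\textup{BMO}}\|\nabla u\|_{L^2}\|\nabla v\|_{L^2}$. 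Lax--Milgram then produces a unique $u\in W^{1,2}_0(\Omega)$ with $B[u,v]=\int_\Omega\F\cdot\nabla v$ for all $v$; choosing $v=u$ and using coercivity yields \eqref{energy-thm} (in fact with $C=\Lambda^{-2}$), and uniqueness follows by applying coercivity to the difference of two solutions.

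For \eqref{reverse-Holder} I would argue by approximation, so as to keep every integral classical. Replace $d$ by a bounded skew-symmetric truncation $d_m$ (composition with a $1$-Lipschitz odd function, which preserves skew-symmetry and keeps $[[d_m]]_{\textup{BMO}}\le C(n)[[d]]_{\textup{BMO}}$) and let $u_m$ be the corresponding weak solution; since the skew-symmetric quadratic form still vanishes, $\|\nabla u_m\|_{L^2(\Omega)}\le\Lambda^{-1}\|\F\|_{L^2(\Omega)}$ uniformly, and passing to the limit in the weak formulation ($d_m\to d$ in $L^2(\Omega)$, weak compactness) identifies the weak limit of $(u_m)$ with $u$. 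The heart of the matter is then a Caccioppoli inequality of reverse H\"older type with constants independent of $m$. Fix $x_0\in\overline\Omega$, small $R$, and $\eta\in C_c^\infty(B_{2R}(x_0))$ with $\eta\equiv1$ on $B_R(x_0)$, $|\nabla\eta|\le C/R$, and test the regularized equation with $\varphi=\eta^2(u_m-c)$, where $c=\fint_{B_{2R}(x_0)\cap\Omega}u_m$ at interior balls and $c=0$ (so $\varphi=\eta^2u_m\in W^{1,2}_0(\Omega)$) at boundary balls. In the expansion of $\int\langle(a+d_m)\nabla u_m,\nabla\varphi\rangle=\int\F\cdot\nabla\varphi$ the term $\int\eta^2\langle d_m\nabla u_m,\nabla u_m\rangle$ vanishes pointwise, and the remaining coupling $2\int\eta(u_m-c)\langle d_m\nabla u_m,\nabla\eta\rangle$ is unchanged when $d_m$ is replaced by $d_m-\overline{(d_m)}_{B_{2R}(x_0)}$, because the correction involves a constant skew matrix which (after one integration by parts) drops out since its quadratic form vanishes and it contracts to zero against the Hessian of $\eta$. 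Hence this coupling is bounded by $\frac{C}{R}\int_{B_{2R}(x_0)\cap\Omega}|d_m-\overline{(d_m)}_{B_{2R}(x_0)}|\,|u_m-c|\,|\nabla u_m|\,dx$; by John--Nirenberg's inequality $\|d_m-\overline{(d_m)}_{B_{2R}}\|_{L^q(B_{2R})}\le C(n,q)[[d]]_{\textup{BMO}}|B_{2R}|^{1/q}$ for any finite $q$, and choosing $q$ slightly larger than $n$ and combining with H\"older's inequality and the Sobolev--Poincar\'e inequality for $u_m-c$ (using, at boundary balls, the zero extension of $u_m$ across $B_{2R}\setminus\Omega$, which has measure $\gtrsim|B_{2R}|$ by the Lipschitz regularity of $\partial\Omega$), one bounds the full $d_m$-contribution by $C(\Lambda,n,[[d]]_{\textup{BMO}})\big(\fint_{B_{2R}(x_0)\cap\Omega}|\nabla u_m|^{s}\,dx\big)^{2/s}$ for some $s=s(n)\in(1,2)$. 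The usual lower-order terms contribute analogously with exponent $2_*=2n/(n+2)$, together with a term $C\fint_{B_{2R}(x_0)\cap\Omega}|\F|^2\,dx$. Enlarging $s$ so that $2_*\le s<2$ we reach
\begin{equation*}
\fint_{B_R(x_0)\cap\Omega}|\nabla u_m|^2\,dx\le C\left[\Big(\fint_{B_{2R}(x_0)\cap\Omega}|\nabla u_m|^{s}\,dx\Big)^{2/s}+\fint_{B_{2R}(x_0)\cap\Omega}|\F|^2\,dx\right],
\end{equation*}
with $C=C(\Lambda,n,\Omega,[[d]]_{\textup{BMO}})$, uniformly in $m$.

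To finish, I would combine this with the global energy bound $\|\nabla u_m\|_{L^2(\Omega)}\le C\|\F\|_{L^2(\Omega)}$ and invoke Gehring's lemma in its version up to the boundary of a bounded Lipschitz domain (as in Meyers' and Giaquinta--Modica's treatment): this yields an exponent $\epsilon_0=\epsilon_0(\Lambda,n,\Omega,[[d]]_{\textup{BMO}})>0$ and, for $p\in[2,2+\epsilon_0]$, the bound $\|\nabla u_m\|_{L^p(\Omega)}\le C(\Lambda,n,\Omega,p,[[d]]_{\textup{BMO}})\|\F\|_{L^p(\Omega)}$ with constants independent of $m$; letting $m\to\infty$ and using weak lower semicontinuity of the $L^p$-norm gives \eqref{reverse-Holder}. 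The main obstacle, I expect, is the Caccioppoli step: arranging the cutoff test so that the singular skew-symmetric part enters only through its oscillation $d-\overline{d}_{B_{2R}}$, and then absorbing that oscillation by John--Nirenberg together with the sub-quadratic integrability gained from Sobolev--Poincar\'e, so that the final constants depend on $[[d]]_{\textup{BMO}}$ and not on $\|d\|_{L^\infty}$. Matching the interior argument up to the boundary of a merely Lipschitz domain (via the zero extension of $u$) is a secondary technical point, but a classical one.
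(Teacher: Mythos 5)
Your proposal is correct and follows essentially the same route as the paper: Lax--Milgram with coercivity from skew-symmetry and continuity from the Coifman--Lions--Meyer--Semmes div--curl estimate paired against $d$ by $\mathcal{H}^1$--\textup{BMO} duality, then a Caccioppoli inequality in which the ball-average of $d$ is subtracted (it drops out by skew-symmetry after integration by parts) so that only the oscillation enters, controlled via John--Nirenberg and Sobolev--Poincar\'e, and finally Gehring's lemma at interior and boundary balls of the Lipschitz domain. The only deviation is your preliminary truncation of $d$ with uniform constants and passage to the limit, which the paper does not use; this is a harmless (in fact slightly more careful) way of keeping all integrals classical and does not change the substance of the argument.
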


Our result is new, since coefficients of $\A$ can be unbounded as the skew-symmetric part $d$ is only assumed to be in the John-Nirenberg $\textup{BMO}$ space. 
Note that the type of estimate \eqref{reverse-Holder} is usually called reverse H\"{o}lder's estimate and is very important in may contexts, see e.g. \cite{Giaquinta, Gia-Modica, Gia-Stru, Giusti, Sverak}. 
The bound \eqref{reverse-Holder} was first established  in \cite{M} for elliptic and bounded matrix $\A$, i.e. $d$ is also bounded. Our result is 
a natural extension of \cite{M}, which covers an important class of equation \eqref{drift}. 
Note also that  in \cite{Sverak} were proved interior estimates  analogous to \eqref{reverse-Holder} for a  parabolic version of \eqref{drift}. See also \cite{Dong-Kim} and \cite{LP} for other similar and related results regarding the equations \eqref{main-eqn} and \eqref{drift}. In comparison our result also yields bounds up to the boundary and with nontrivial right hand side. 

Our next theorem is the main contribution of this paper. This theorem shows that the dependence of $\epsilon_0$ defined in Theorem \ref{self-improvement} on $[[d]]_{\textup{BMO}}$ is in general necessary.  Moreover, this theorem also shows the nonlinear dependence of $\alpha$ on $[[d]]_{\textup{BMO}}$ in the $C^\alpha$-regularity estimates established in \cite{Sverak}. 
\begin{theorem}\label{exam-thrm} Let $\mu \in (0,1)$, there exists a measurable function $d: \mathbb{R}^2 \rightarrow \mathbb{R}$ with $\norm{d}_{L^\infty(\mathbb{R}^2)} = \frac{\pi (1-\mu^2)}{2\mu}$, and there is a weak solution $u \in W^{1,2}(B_1)$ of
\[
\textup{div}[\A(x) \nabla u] = 0, \quad \text{in} \quad B_1
\]
such that $\nabla u \not \in L^{p}(B_{1/2})$ for $p \geq \frac{2}{1-\mu}$ and $u \not\in C^\alpha(B_{1/2})$ for $\alpha \geq \mu$, where
\[
\A(x) = \left(\begin{matrix} 1 & 0 \\ 0 & 1 \end{matrix} \right) + \left(\begin{matrix} 0 & d(x) \\ -d(x)  & 0 \end{matrix} \right), \quad x \in \mathbb{R}^2,
\]
and $B_\rho \subset \R^2 $ denotes the ball of radius $\rho>0$ centered at the origin.
\end{theorem}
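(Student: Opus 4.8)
The plan is to construct an explicit singular solution $u$ on $B_1$ that is homogeneous of degree $\mu$ up to a logarithmic factor, driven by a coefficient $d$ depending only on the polar angle. Write $x=r(\cos\theta,\sin\theta)$ and look for $d=d(\theta)$ (homogeneous of degree zero on $\R^2$). For $\A=I+D$ with $D=\bigl(\begin{smallmatrix}0&d\\-d&0\end{smallmatrix}\bigr)$ a direct computation gives $\operatorname{div}[\A\nabla u]=\Delta u-\frac{d'(\theta)}{r}\,\partial_r u$, so the equation reads $\Delta u=\frac{d'(\theta)}{r}\partial_r u$; away from the rays where $d$ jumps this must hold classically, while across a jump ray $\{\theta=\theta_j\}$ the weak formulation forces the conormal flux $(\A\nabla u)\cdot\hat\theta$ to be continuous, which for an angular profile amounts to a transmission condition on the first $\theta$-derivative.

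I would then take the resonant ansatz $u(r,\theta)=r^{\mu}\bigl(g_0(\theta)+g_1(\theta)\log r\bigr)$, which is precisely the profile that lies in $W^{1,2}(B_1)$ (thanks to $\mu>0$) yet fails to be $C^{\mu}$ and makes $\nabla u$ just miss $L^{2/(1-\mu)}$ near the origin. Substituting and collecting the $\log r$-part and the $r^0$-part separates the problem into a coupled ODE system, with $L_\mu:=\partial_\theta^2+\mu^2-\mu d'$,
\[
L_\mu g_1=0,\qquad L_\mu g_0=(d'-2\mu)\,g_1,
\]
both on the circle, with $2\pi$-periodicity together with the transmission conditions $[\,g_1'\,]=\mu[d]\,g_1$ and $[\,g_0'\,]=[d]\,(\mu g_0+g_1)$ at each discontinuity of $d$. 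Hence everything is reduced to choosing $d=d(\theta)$ so that: (i) $L_\mu$ admits a nontrivial $2\pi$-periodic solution $g_1$, i.e. the $2\times 2$ monodromy matrix of $L_\mu g=0$ around the circle has eigenvalue $1$ (equivalently trace $2$); and (ii) the Fredholm solvability condition for the second equation, namely $\int_0^{2\pi}(d'-2\mu)\,g_1^2\,d\theta=0$, is satisfied.

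For $d$ I would use a piecewise constant function with finitely many jumps, the jump heights and positions serving as the free parameters. On every arc where $d$ is constant one has $g_1''+\mu^2 g_1=0$, so $g_1$ is a trigonometric function there and the monodromy is an explicit product of rotation-type matrices (the arcs) and lower-triangular ``jump'' matrices; condition (i) then becomes an algebraic relation between the parameters and $\mu$, and (ii) a second one. Solving both and computing the resulting sup-norm yields $\|d\|_{L^\infty}=\frac{\pi(1-\mu^2)}{2\mu}$; note that this quantity tends to $\infty$ as $\mu\to 0^+$, which is exactly the feature the theorem highlights. With $d,g_0,g_1$ fixed, the remaining verifications are routine: (a) $u\in W^{1,2}(B_1)$, using $|\nabla u|\lesssim r^{\mu-1}\log(1/r)$ near the origin, $\int_0^{1/2} r^{2\mu-1}(\log(1/r))^2\,dr<\infty$, and the continuity of $u$ across the jump rays; (b) $u$ is a weak solution, by integrating by parts sector-by-sector---on each sector $\operatorname{div}[\A\nabla u]=0$ classically, the transmission conditions annihilate all ray contributions, and the flux of $\A\nabla u$ through circles $\{|x|=\rho\}$ tends to $0$ as $\rho\to0$, so the origin contributes nothing; (c) $\nabla u\notin L^{p}(B_{1/2})$ for $p\ge\frac{2}{1-\mu}$, since on a sector where $g_1$ is bounded away from $0$ one has $|\nabla u|\gtrsim r^{\mu-1}\log(1/r)$ and $\int_0^{1/2} r^{(\mu-1)p+1}(\log(1/r))^p\,dr=\infty$ exactly for such $p$; and (d) $u\notin C^{\alpha}(B_{1/2})$ for $\alpha\ge\mu$, because along a ray $\theta=\theta_0$ with $g_1(\theta_0)\ne0$ one has $|u(r,\theta_0)|\,r^{-\mu}\sim\log(1/r)\to\infty$.

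The crux is steps (i)--(ii): exhibiting an explicit $d\in L^\infty$ for which the periodicity (monodromy) condition and the resonance (Fredholm) condition hold simultaneously for every $\mu\in(0,1)$, and verifying that the associated $\|d\|_{L^\infty}$ is exactly $\frac{\pi(1-\mu^2)}{2\mu}$. A secondary technical point is to make the passage through the weak formulation rigorous at the discontinuities of $d$ and at the origin---that is, to check no spurious surface or point measures are generated---which is where the transmission conditions and the $\mu>0$ decay of the boundary integral on $\partial B_\rho$ are used.
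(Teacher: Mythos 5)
Your proposal does not actually construct the pair $(d,u)$: everything is reduced to choosing a piecewise-constant angular profile $d(\theta)$ so that (i) the monodromy of $L_\mu=\partial_\theta^2+\mu^2-\mu d'$ has a $2\pi$-periodic eigenfunction $g_1$ and (ii) the Fredholm condition $\int_0^{2\pi}(d'-2\mu)g_1^2\,d\theta=0$ holds, and then it is simply asserted that the parameters can be solved for and that the resulting sup-norm ``yields'' $\|d\|_{L^\infty}=\frac{\pi(1-\mu^2)}{2\mu}$ for every $\mu\in(0,1)$. No such $d$ is exhibited, no solvability of the two algebraic conditions is shown, and no computation ties the construction to that specific norm value; you yourself flag this as ``the crux.'' As it stands this is a reduction to an ODE/monodromy problem plus an unproved existence claim, so the theorem is not proved. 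It is also far from clear that a piecewise-constant $d$ can be made to produce exactly the stated norm; the value $\frac{\pi(1-\mu^2)}{2\mu}$ is tailored to a different, much simpler choice.

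The paper's proof bypasses all of this with an explicit pair: $d(x,y)=\frac{\mu^2-1}{\mu}\arctan(y/x)$ (so $d$ is linear in $\theta$ on each half-plane, $d_\theta\equiv C_\mu$, $d_r\equiv 0$) and the purely homogeneous $u=r^\mu\cos\theta=x(x^2+y^2)^{\frac{\mu-1}{2}}$, with no logarithmic correction. In polar coordinates $Lu=\Delta u+\frac{d_ru_\theta-d_\theta u_r}{r}$, and the choice $C_\mu=\frac{\mu^2-1}{\mu}$ makes $Lu=0$ an algebraic identity off the $y$-axis; the weak formulation is then verified directly, the jump of $d$ across the $y$-axis being harmless because $u_y=0$ there, and the origin contributing nothing since $|\nabla u|\le Cr^{\mu-1}$ with $\mu>0$. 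The sup-norm $\frac{\pi(1-\mu^2)}{2\mu}$ is then immediate, and $|\nabla u|\sim r^{\mu-1}$ already fails $L^{2/(1-\mu)}$ at the endpoint, so your logarithmic ansatz is unnecessary for the gradient claim (it would only matter for the borderline H\"older exponent $\alpha=\mu$). If you want to salvage your route, you must actually produce $d$, prove (i)--(ii) for all $\mu\in(0,1)$, and compute its $L^\infty$ norm; otherwise the argument has a genuine gap precisely at the construction the theorem asks for.
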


We emphasize again that  Theorem \ref{exam-thrm} also shows the nonlinear dependence of $\epsilon_0$ defined in Theorem \ref{self-improvement} on the $[[d]]_{\textup{BMO}}$.  In particular, Theorem \ref{exam-thrm} shows that the regularity estimate \eqref{reverse-Holder} fails to hold for arbitrary large $p > 2$, even when the symmetric part of $\A$ is the identity matrix and the skew-symmetric part is bounded.  In addition, Theorem \ref{exam-thrm} also shows that local $C^\alpha$-norm of the solution is not bounded for any $\alpha > 0$ if $[[d]]_{\textup{BMO}}$ is sufficiently large, showing that the dependence of $\alpha$ on $[[d]]_{\textup{BMO}}$ cannot be in general 
removed in  \cite[Lemma 2.9]{Sverak}. Observe that an analogous example is also provided in 
\cite{M}, for non-smooth bounded symmetric part of $\A$ and trival skew-symmetric part.  In comparison our example deals with problems with smooth diffusion and singular, divergence free drifts. The two examples demonstrate that both $[[d]]_{\textup{BMO}}$ and $[[a]]_{\textup{BMO}}$ of $[[\A]]_{\textup{BMO}}$ are have to be sufficiently small for establishing Calder\'{o}n-Zydmund type estimates for weak solutions of \eqref{main-eqn} and \eqref{drift}, see \cite{TP, TP1}.

In the rest of the paper we prove Theorems \ref{self-improvement}-\ref{exam-thrm}.  The proof of theorem \ref{self-improvement} is given in Section \ref{proof-self-thrm}, and the proof of Theorem \ref{exam-thrm} is provided in Section \ref{example-section}.

\section{Proof of Theorem \ref{self-improvement}} \label{proof-self-thrm}

The proof of Theorem \ref{self-improvement} is quite standard and it could be known by experts. We provide it here for completeness. We need several lemmas. We first recall two classical analysis lemmas that are needed in the proof. The first lemma is a Poincar\'{e}-Sobolev inequality, whose proof can be found in \cite[p. 13]{Ben-Fre}.

\begin{lemma}[Poincar\'e-Sobolev inequality] \label{Poincare} Let $\Omega \subset \R^n$ be a Lipschitz domain,  $\mu \in [1, 2]$, and let $\lambda >0$ be such that
\[
n \left( \frac{1}{\lambda} - \frac{1}{\mu} \right) + 1 \geq 0.
\]
If $u \in W^{1,2}_0(\Omega)$, then for every $r >0, x_0 \in \overline{\Omega}$ 
\[
\left(\int_{\Omega_{r}(x_0)} |u(x) - (u)_{B_r(x_0)}|^\lambda \right)^{1/\lambda} 
\leq C(n) r^{ n \big(\frac{1}{\lambda} - \frac{1}{\mu}  \big) +1} \left( \int_{\Omega_{4r/3}(x_0)} |\nabla u|^\mu dx \right)^{1/\mu} \,,
\]
where
\begin{equation} \label{average}
(u)_{B_r(x)} = 
\left\{
\begin{array}{ll}
\fint_{B_r(x)} u(y) dy, & \quad \text{if} \quad B_r(x) \subset \Omega, \\
0 & \quad \text{if} \quad B_r(x) \cap (\mathbb{R}^n \setminus \Omega) \not=\emptyset.
\end{array}
\right.
\end{equation}
\end{lemma}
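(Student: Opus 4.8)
\textbf{Proof proposal for Lemma \ref{Poincare}.}

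The plan is to reduce the claimed inequality to two standard ingredients: the classical Poincar\'e inequality on a ball (or a comparable convex set) and the Sobolev embedding $W^{1,\mu}\hookrightarrow L^\lambda$ for the range of exponents permitted by the condition $n\big(\tfrac1\lambda-\tfrac1\mu\big)+1\ge 0$. First I would dispose of the trivial case: if $B_r(x_0)\cap(\mathbb{R}^n\setminus\Omega)\neq\emptyset$, then by \eqref{average} we have $(u)_{B_r(x_0)}=0$, so the left-hand side is just $\big(\int_{\Omega_r(x_0)}|u|^\lambda\big)^{1/\lambda}$, and the estimate follows from the zero boundary condition $u\in W^{1,2}_0(\Omega)$: extend $u$ by zero to all of $\mathbb{R}^n$, so that on the ball $B_r(x_0)$ the extended function has a large ``hole'' where it vanishes, and apply a Poincar\'e--Sobolev inequality of the form $\|u\|_{L^\lambda(B_{4r/3}(x_0))}\le C(n) r^{\,n(1/\lambda-1/\mu)+1}\|\nabla u\|_{L^\mu(B_{4r/3}(x_0))}$, valid because $u$ vanishes on a set of positive measure in the enlarged ball; the factor $r^{n(1/\lambda-1/\mu)+1}$ is exactly the scaling dimension of the inequality and is recovered by a change of variables $y\mapsto x_0+ry$.

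For the main case $B_r(x_0)\subset\Omega$, so $\Omega_r(x_0)=B_r(x_0)$ and $(u)_{B_r(x_0)}$ is the genuine average, I would argue as follows. By scaling, set $v(z)=u(x_0+rz)$ on $B_1$; then $\nabla v(z)=r\,\nabla u(x_0+rz)$, and tracking the powers of $r$ in the $L^\lambda$ and $L^\mu$ norms produces precisely the stated exponent $n(1/\lambda-1/\mu)+1$. So it suffices to prove the dimensionless statement
\[
\left(\fint_{B_1}|v-(v)_{B_1}|^\lambda\,dz\right)^{1/\lambda}\le C(n)\left(\int_{B_{4/3}}|\nabla v|^\mu\,dz\right)^{1/\mu},
\]
or, absorbing constants, with $\int_{B_1}$ on the left. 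Since $\mu\in[1,2]$ and $n(1/\lambda-1/\mu)+1\ge0$ means $\lambda\le \mu^*:=n\mu/(n-\mu)$ when $\mu<n$ (and $\lambda$ arbitrary finite when $\mu\ge n$, with the convention that the borderline is handled by $\lambda<\infty$), the Sobolev--Poincar\'e inequality on the ball $B_{4/3}$ gives $\|v-(v)_{B_{4/3}}\|_{L^\lambda(B_{4/3})}\le C(n)\|\nabla v\|_{L^\mu(B_{4/3})}$. Finally I would replace the average over $B_{4/3}$ by the average over $B_1$ at the cost of a constant depending only on $n$: indeed $|(v)_{B_1}-(v)_{B_{4/3}}|\le \fint_{B_1}|v-(v)_{B_{4/3}}|\le C(n)\|v-(v)_{B_{4/3}}\|_{L^\lambda(B_{4/3})}$ by H\"older, and then $\|v-(v)_{B_1}\|_{L^\lambda(B_1)}\le \|v-(v)_{B_{4/3}}\|_{L^\lambda(B_1)}+|(v)_{B_1}-(v)_{B_{4/3}}|\,|B_1|^{1/\lambda}$, both terms now controlled by $\|\nabla v\|_{L^\mu(B_{4/3})}$. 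Undoing the scaling yields the lemma.

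The main obstacle, or rather the point requiring care, is the bookkeeping at the two ends of the exponent range and the matching of domains $\Omega_r$, $\Omega_{4r/3}$: one must check that the enlargement from radius $r$ to $4r/3$ is genuinely needed only to have a fixed overlap so that the Sobolev--Poincar\'e constant stays uniform, and that in the boundary-intersecting case the ``zero on a positive-measure set'' version of Poincar\'e is legitimately applicable (this uses $u\in W^{1,2}_0(\Omega)$ and the Lipschitz regularity of $\partial\Omega$ only through the zero extension, not through any chain-condition on $\Omega$). Since all of this is classical and the reference \cite[p.~13]{Ben-Fre} carries it out, I would keep the argument brief and cite it for the details, presenting only the scaling reduction and the two ingredients explicitly.
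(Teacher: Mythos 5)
The paper offers no proof of this lemma at all --- it is quoted with a pointer to \cite[p.~13]{Ben-Fre} --- so there is nothing in-paper to match your argument against. Your outline is the standard route to such a statement, and the interior case is fine: scaling to $B_1$, the Sobolev--Poincar\'e inequality on $B_{4/3}$ (legitimate in the stated exponent range), and the swap of the average over $B_{4/3}$ for the average over $B_1$ all work, and extending $u$ by zero makes $\int_{B_{4r/3}(x_0)}|\nabla \tilde u|^{\mu}=\int_{\Omega_{4r/3}(x_0)}|\nabla u|^{\mu}$ even when $B_{4r/3}(x_0)\not\subset\Omega$.

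The step that is wrong as written is the boundary-intersecting case. A Poincar\'e inequality for a function vanishing on a set of merely \emph{positive} measure carries a constant that degenerates as the relative measure of the zero set shrinks, so ``$u$ vanishes on a set of positive measure in the enlarged ball'' does not yield a bound uniform in $r$ and $x_0$. What you need, and what the Lipschitz hypothesis actually provides, is the quantitative exterior density $|B_\rho(z)\setminus\Omega|\ge c\,|B_\rho(z)|$ for $z\in\partial\Omega$ --- the very fact the paper invokes later as \eqref{Lip-boundary}. Since $B_r(x_0)$ meets $\R^n\setminus\Omega$ and $x_0\in\overline{\Omega}$, convexity of the ball gives a boundary point $z^*\in B_r(x_0)$, then $B_{r/3}(z^*)\subset B_{4r/3}(x_0)$, so the zero set of the extension occupies a fixed fraction of $B_{4r/3}(x_0)$ and the Poincar\'e constant is uniform. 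Your parenthetical claim that Lipschitz regularity enters ``only through the zero extension, not through any chain-condition'' is therefore misplaced on both counts: the zero extension of a $W^{1,2}_0(\Omega)$ function lies in $W^{1,2}(\R^n)$ with no boundary regularity whatsoever, whereas the density bound is exactly where Lipschitz-ness is used, and it makes the constant depend on the Lipschitz character of $\Omega$ (as in the cited reference), not on $n$ alone. With that correction your sketch is complete up to routine bookkeeping.
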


\noindent
In the next lemma, we denote $Q_r(x_0)$  a cube in $\mathbb{R}^n$ with edge of length $2r >0$, centered $x_0 \in \mathbb{R}^n$. The following Gehring-type estimate is due to M. Giaquinta and G. Modica \cite{Gia-Modica} and the following
statement is proved in \cite[Theorem 1.10, p. 25]{Ben-Fre}, see also \cite[Proposition 1.1, p. 122]{Giaquinta}.

\begin{lemma}[Gehring type estimate] \label{Gehring} Fix $Q$ a bounded cube in $\mathbb{R}^n$, $ 1 < q <  l$, and let $f, g$ be non-negative functions  such that $g \in L^q(Q)$ and $f \in L^l(Q)$. Assume that there exists $b >1$ such that
\[
\fint_{Q_r(x_0)} g^q dx \leq b \left[ \left(\fint_{Q_{2r}(x_0)} g dx \right)^{q} + \fint_{Q_{2r}(x_0)} f^q dx \right]
\]
for each $Q_r(x_0)$ so that $Q_{2r}(x_0) \subset Q$. Then, there exists $\epsilon = \epsilon(q, n, b) >0$ such that for every $p \in [q, q + \min\{l-q,\epsilon\})$, there is $C = C(q,n, b, p)$ such that
\[
\left( \fint_{Q_r} g^p dx \right)^{1/p} \leq C \left[ \left(\fint_{Q_{2r}} g^q dx \right)^{1/q} + \left( \fint_{Q_{2r}} f^p dx \right)^{1/p} \right],
\]
for each cube $Q_r$ satisfying $Q_{2r} \subset Q$.
\end{lemma}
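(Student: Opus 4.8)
\textit{Proof sketch (proposal).} The plan is to prove this classical real-analytic estimate --- already due to Giaquinta and Modica \cite{Gia-Modica} --- by a Calder\'on--Zygmund stopping-time construction adapted to the level sets of $g$, followed by a distribution-function (``good-$\lambda$'') argument that upgrades the exponent, in the spirit of Gehring. First I would reduce, by translating and dilating (the hypothesis is invariant under such rescalings, with the same $b$), to the case of two fixed concentric cubes $Q_{1}=Q_{1}(0)\subset Q_{2}=Q_{2}(0)\subset Q$, the goal being to bound $\int_{Q_{1}}g^{p}$ by $\big(\int_{Q_{2}}g^{q}\big)^{p/q}+\int_{Q_{2}}f^{p}$ up to a constant; the stated form, with averages and the pair $(Q_{r},Q_{2r})$, then follows by scaling back. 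I would set $\lambda_{0}:=\big(\fint_{Q_{2}}g^{q}\,dx\big)^{1/q}+\big(\fint_{Q_{2}}f^{q}\,dx\big)^{1/q}$, fix a large dimensional multiple $\bar\lambda_{0}=\kappa(n)\lambda_{0}$, and work only with heights $\lambda\ge\bar\lambda_{0}$. The a priori finiteness of $\int g^{p}$ --- which is \emph{not} among the hypotheses --- would be handled by the routine device of truncating $g$ and integrating only up to a finite height, then passing to the limit; I would not dwell on it.

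For each $\lambda\ge\bar\lambda_{0}$ I would cover $E_{\lambda}:=\{y\in Q_{1}:g(y)>\lambda\}$ by Calder\'on--Zygmund cubes through the ``escape-radius'' construction: for a.e.\ $x\in E_{\lambda}$, Lebesgue differentiation gives $\fint_{Q_{r}(x)}g^{q}\to g(x)^{q}>\lambda^{q}$ as $r\to0$, while the choice of $\bar\lambda_{0}$ makes the average of $g^{q}$ over every sufficiently large cube about $x$ inside $Q_{2}$ drop below $\lambda^{q}$; hence, for such $x$, the largest radius $r_{x}$ with $Q_{r_{x}}(x)\subset Q_{2}$ and $\fint_{Q_{r_{x}}(x)}g^{q}\ge\lambda^{q}$ is well-defined and small, satisfies $Q_{2r_{x}}(x)\subset Q_{2}$, and by maximality $\fint_{Q_{2r_{x}}(x)}g^{q}<\lambda^{q}$, whence also $\fint_{Q_{r_{x}}(x)}g^{q}<2^{n}\lambda^{q}$. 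A Besicovitch-type selection then yields a bounded-overlap family $\{Q_{i}=Q_{r_{i}}(x_{i})\}$ covering $E_{\lambda}$ up to a null set, with $2Q_{i}:=Q_{2r_{i}}(x_{i})\subset Q_{2}\subset Q$, $\lambda^{q}\le\fint_{Q_{i}}g^{q}<2^{n}\lambda^{q}$, and $\fint_{2Q_{i}}g^{q}<\lambda^{q}$. On each $Q_{i}$ the hypothesis of the Lemma applies (as $2Q_{i}\subset Q$); multiplying it by $|Q_{i}|$ and splitting $g$ and $f$ inside the averages over $2Q_{i}$ at a height $c\lambda$, with $c=c(n,b,q)\in(0,\tfrac12)$ small enough that the truncated part of $g$ can be absorbed into the left-hand side (which forces $2^{q+1}bc^{q}\le1$) --- the essential point being that the tail of $g$ then enters only linearly, via $\big(\fint_{2Q_{i}}g\,\mathbf{1}_{\{g>c\lambda\}}\big)^{q}\le\big(\fint_{2Q_{i}}g\,\mathbf{1}_{\{g>c\lambda\}}\big)\big(\fint_{2Q_{i}}g^{q}\big)^{(q-1)/q}\le\lambda^{q-1}\fint_{2Q_{i}}g\,\mathbf{1}_{\{g>c\lambda\}}$ --- summation over the family would give a constant $A=A(n,b,q)$ with
\[
\int_{\{g>\lambda\}\cap Q_{1}}\!g^{q}\,dx\le A\lambda^{q-1}\!\!\int_{\{g>c\lambda\}\cap Q_{2}}\!\!g\,dx+A\!\!\int_{\{f>c\lambda\}\cap Q_{2}}\!\!f^{q}\,dx+Ac^{q}\!\!\int_{\{g>c\lambda\}\cap Q_{2}}\!\!g^{q}\,dx,\qquad\lambda\ge\bar\lambda_{0}.
\]

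Next I would multiply this inequality by $(p-q)\lambda^{p-q-1}$ and integrate over $\lambda\in(\bar\lambda_{0},\infty)$; by Fubini the left side dominates $\int_{Q_{1}}g^{p}-2\bar\lambda_{0}^{p-q}\int_{Q_{1}}g^{q}$, and the three right-hand terms become $\frac{A(p-q)}{(p-1)c^{p-1}}\int_{Q_{2}}g^{p}$, $\frac{A}{c^{p-q}}\int_{Q_{2}}f^{p}$, and --- for $p-q$ small --- at most $2Ac^{q}\int_{Q_{2}}g^{p}$, so that
\[
\int_{Q_{1}}g^{p}\,dx\le\theta\int_{Q_{2}}g^{p}\,dx+2\bar\lambda_{0}^{p-q}\int_{Q_{1}}g^{q}\,dx+\frac{A}{c^{p-q}}\int_{Q_{2}}f^{p}\,dx,\qquad\theta:=\frac{A(p-q)}{(p-1)c^{p-1}}+2Ac^{q}.
\]
I would then choose, in this order, $c=c(n,b,q)$ so small that $2Ac^{q}\le\tfrac13$ (compatibly with $2^{q+1}bc^{q}\le1$), and then $\epsilon=\epsilon(n,b,q)>0$ so small that $\frac{A(p-q)}{(p-1)c^{p-1}}\le\tfrac13$ for every $p\in[q,q+\epsilon)$, giving $\theta\le\tfrac23<1$. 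Since the whole argument applies verbatim to any nested pair $Q_{\rho}\subset Q_{R}\subset Q_{2}$ --- the only change being that $\bar\lambda_{0}$, and hence the ``data'' term, picks up a factor $(R-\rho)^{-n(p-q)/q}$ --- I would feed this into the classical iteration lemma (see \cite[Ch.~V]{Giaquinta} or \cite{Giusti}) applied to $\phi(t):=\int_{Q_{t}}g^{p}$ to eliminate the term $\theta\int_{Q_{2}}g^{p}$, obtaining $\int_{Q_{1}}g^{p}\le C(n,b,q,p)\big[\big(\int_{Q_{2}}g^{q}\big)^{p/q}+\int_{Q_{2}}f^{p}\big]$. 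Rescaling then gives the assertion; the restriction $p<l$ (i.e.\ $p-q<l-q$) is used only to guarantee $\int f^{p}<\infty$.

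The hard part will be the correct sequencing of the two smallness choices in the last step: the error coefficient $2Ac^{q}$ in $\theta$ does \emph{not} become small as $p\downarrow q$, so $c$ must be frozen first, using only $n$, $b$, $q$; only afterwards, with $c$ fixed, does the main coefficient $\frac{A(p-q)}{(p-1)c^{p-1}}$ become small, thanks to the extra factor $p-q$ created when one clears the $\frac{1}{p-q}$ produced by the $\lambda$-integration. This is precisely why $\epsilon$ must be allowed to depend on $b$, and why it degrades as $b\to\infty$ (since $A$ grows with $b$). Everything else --- the escape-radius covering, the truncation securing $\int g^{p}<\infty$ a priori, and the iteration lemma itself --- is entirely standard.
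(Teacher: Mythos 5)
The paper does not actually prove this lemma: it is quoted verbatim as a known result of Giaquinta--Modica, with the proof deferred to \cite[Theorem 1.10, p.~25]{Ben-Fre} and \cite[Proposition 1.1, p.~122]{Giaquinta}. So there is no in-paper argument to compare with; what you have written is a reconstruction of the classical Gehring/Giaquinta--Modica proof, and its architecture is the right one. The key mechanisms are all present and correctly sequenced: the stopping-radius (Calder\'on--Zygmund) selection at heights $\lambda\ge\kappa(n)\lambda_0$ with $\lambda^q\le\fint_{Q_i}g^q<2^n\lambda^q$ and $\fint_{2Q_i}g^q<\lambda^q$; the absorption of the truncated part of $g$ using the reverse H\"older hypothesis with $c$ chosen from $b,q$ only; the linearization of the tail via $\bigl(\fint_{2Q_i}g\,\mathbf{1}_{\{g>c\lambda\}}\bigr)^{q}\le\lambda^{q-1}\fint_{2Q_i}g\,\mathbf{1}_{\{g>c\lambda\}}$ (which is correct, by Jensen and the stopping bound on $2Q_i$); the $\lambda^{p-q-1}$-integration with the crucial factor $p-q$ in front of the main term; and the order of the smallness choices ($c$ first from $n,b,q$, then $\epsilon$), which is exactly why $\epsilon=\epsilon(q,n,b)$ as stated. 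The restriction $p<l$ and the role of $f\in L^l$ are also correctly identified.

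Two points in your sketch are thinner than the rest and would need care in a full write-up. First, Besicovitch (or a Vitali selection of disjoint $Q_i$) gives bounded overlap of the \emph{selected} cubes, not of their concentric doubles $2Q_i$, and disjoint cubes of very different sizes can have doubles piling up at a point; since your summed right-hand side involves integrals over the $2Q_i$, you must either justify finite overlap of the dilated family or, as in the standard dyadic Calder\'on--Zygmund version of the proof, replace the concentric doubles by dyadic parents (each cube has one parent and is parent of $2^n$ children, so the multiplicity is automatic), applying the hypothesis to a concentric cube containing the parent. Second, the a priori finiteness of $\int g^p$, which is needed both for the absorption of $\theta\int_{Q_2}g^p$ and for the iteration lemma, is only waved at; the truncation device you mention does work, but it is part of the proof, not a remark. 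With these two details supplied, your argument is the standard proof of the cited result.
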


Next, for each $x \in \overline{\Omega}$, and each $r >0$, we write $B_r(x)$ the open ball in $\mathbb{R}^n$ with radius $r$ and centered at $x$. Moreover, we write
\[
\Omega_r(x) = B_r(x) \cap \Omega.
\]
In the next lemma, we establish Caccioppoli-type estimates for weak solutions of \eqref{main-eqn}.

\begin{lemma}[Caccioppoli type inequality] \label{Caccioppoli} 
Fix $s \in (1,2)$, $\Lambda >0$, $\Omega \subset \R^n$ and assume that the $n \times n$ matrix $\A$ satisfies \eqref{ellipticity-cond}-\eqref{BMO-d}. 
Then,  there is a constant  $C$ depending only on $\Lambda$, $s$, and $n$ such that for any weak solution $u$ of \eqref{main-eqn}, any $r >0$ and $x_0 \in \overline{\Omega}$, one has
\[
\int_{\Omega_{r}(x_0)} |\nabla \hat{u} |^2 dx \leq C(\Lambda, n ,s) \left[ r^{\frac{2n}{s'} -2}\Big[ [[d]]^2_{\textup{BMO}} +1 \Big] \left\{ \int_{\Omega_{3r/2}(x_0)} |\hat{u}|^{\frac{2s}{2-s}} dx \right\}^{\frac{2-s}{s}} +\int_{\Omega_{3r/2}(x_0)} |\F(x)|^2 dx \right ].
\]
where  $s' = \frac{s}{s-1}$ and $\hat{u} = u - (u)_{B_r(x_0)}$ with $(u)_{B_r(x_0)}$ is defined in \eqref{average}.
\end{lemma}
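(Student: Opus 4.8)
The plan is to run the classical energy estimate with the test function $\varphi=\eta^2\hat{u}$, where $\eta$ is a smooth cutoff with $0\le\eta\le 1$, $\eta\equiv 1$ on $B_r(x_0)$, $\textup{supp}\,\eta\subset B_{3r/2}(x_0)$ and $|\nabla\eta|\le C/r$. First I would check that $\varphi\in W^{1,2}_0(\Omega)$ and is an admissible test function: if $B_r(x_0)\cap(\R^n\setminus\Omega)\ne\emptyset$ then $(u)_{B_r(x_0)}=0$, so $\hat{u}=u\in W^{1,2}_0(\Omega)$ (extended by zero across $\partial\Omega$), while if $B_r(x_0)\subset\Omega$ the support of $\eta$ takes care of the boundary; moreover $|d|\,|\hat{u}|\,|\nabla u|\in L^1$ by the John--Nirenberg and Sobolev embeddings, so the weak formulation evaluated at $\varphi$ is meaningful despite $d$ being unbounded. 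Inserting $\varphi$ into the weak formulation of \eqref{main-eqn} and using $\nabla\hat{u}=\nabla u$ and $\nabla\varphi=\eta^2\nabla\hat{u}+2\eta\hat{u}\nabla\eta$ splits the left side into $I_1=\int\eta^2\langle\A\nabla u,\nabla u\rangle$ and $I_2=2\int\eta\hat{u}\langle\A\nabla u,\nabla\eta\rangle$. Writing $\A=a+d$, the skew-symmetry of $d$ gives $\langle d(x)\xi,\xi\rangle=0$ for a.e.\ $x$ and all $\xi$, so $I_1=\int\eta^2\langle a\nabla u,\nabla u\rangle\ge\Lambda\int\eta^2|\nabla u|^2$ by \eqref{ellipticity-cond}.

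The only delicate term is the skew part of $I_2$, namely $J:=2\int\eta\hat{u}\langle d\nabla u,\nabla\eta\rangle$, since $d$ is unbounded. Here I would subtract the average: with $\overline{d}=\overline{d}_{B_{3r/2}(x_0)}$ write $d=(d-\overline{d})+\overline{d}$. For the constant skew matrix $\overline{d}$ one has $2\int\eta\hat{u}\langle\overline{d}\nabla\hat{u},\nabla\eta\rangle=\int\langle\overline{d}\nabla\hat{u},\nabla(\eta^2\hat{u})\rangle$ (the $\eta^2$-piece drops by skew-symmetry), and this vanishes: after mollifying $\hat{u}$, the integrand is, up to the constant skew coefficients $\overline{d}_{ij}$, the null Lagrangian $\partial_j w\,\partial_i v-\partial_i w\,\partial_j v=\partial_i(v\,\partial_j w)-\partial_j(v\,\partial_i w)$ with $v=\eta^2\hat{u}$ compactly supported, whose integral over $\R^n$ is $0$; this use of skew-symmetry is what replaces, in the singular setting, the trivial drift term in \cite{M}. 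For the remaining piece $2\int\eta\hat{u}\langle(d-\overline{d})\nabla u,\nabla\eta\rangle$, Cauchy--Schwarz, $|\nabla\eta|\le C/r$ and Young's inequality give, for any $\delta>0$, the bound $\delta\int\eta^2|\nabla u|^2+C_\delta r^{-2}\int_{\Omega_{3r/2}(x_0)}|d-\overline{d}|^2|\hat{u}|^2$. To the last integral I apply H\"older's inequality with the conjugate exponents $\big(\frac{s}{2-s},\frac{s}{2(s-1)}\big)$, chosen exactly so that twice the first equals $\frac{2s}{2-s}$ and twice the second equals $s'$, followed by the John--Nirenberg inequality, which yields $\big(\int_{B_{3r/2}(x_0)}|d-\overline{d}|^{s'}\big)^{2/s'}\le C(n,s)[[d]]_{\textup{BMO}}^2\,r^{2n/s'}$. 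Collecting the powers of $r$ produces precisely the term $r^{\frac{2n}{s'}-2}[[d]]_{\textup{BMO}}^2\big(\int_{\Omega_{3r/2}(x_0)}|\hat{u}|^{\frac{2s}{2-s}}\big)^{\frac{2-s}{s}}$.

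The symmetric part of $I_2$ and the right side $\int\langle\F,\nabla(\eta^2\hat{u})\rangle$ are then routine: Young's inequality bounds the former by $\delta\int\eta^2|\nabla u|^2+C_\delta r^{-2}\int_{\Omega_{3r/2}(x_0)}|\hat{u}|^2$ and the latter by $\delta\int\eta^2|\nabla u|^2+C\int_{\Omega_{3r/2}(x_0)}|\F|^2+Cr^{-2}\int_{\Omega_{3r/2}(x_0)}|\hat{u}|^2$; one further H\"older step, using $1-\frac{2-s}{s}=\frac{2}{s'}$ and $|\Omega_{3r/2}(x_0)|\le Cr^n$, converts $r^{-2}\int_{\Omega_{3r/2}(x_0)}|\hat{u}|^2$ into $r^{\frac{2n}{s'}-2}\big(\int_{\Omega_{3r/2}(x_0)}|\hat{u}|^{\frac{2s}{2-s}}\big)^{\frac{2-s}{s}}$, which accounts for the ``$+1$'' sitting next to $[[d]]_{\textup{BMO}}^2$. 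Choosing $\delta$ small so that all the $\delta\int\eta^2|\nabla u|^2$ terms are absorbed into $\Lambda\int\eta^2|\nabla u|^2$, and then using $\eta\equiv1$ on $B_r(x_0)$ together with $\nabla\hat{u}=\nabla u$, yields the asserted inequality with $C=C(\Lambda,n,s)$. The main obstacle is the treatment of $J$: establishing the cancellation $\int\langle\overline{d}\nabla\hat{u},\nabla(\eta^2\hat{u})\rangle=0$ cleanly at the $W^{1,2}$ regularity level via mollification, and pinning down the precise H\"older exponents so that John--Nirenberg delivers exactly $[[d]]_{\textup{BMO}}^2$ with the weight $r^{\frac{2n}{s'}-2}$; the admissibility of $\varphi$ near $\partial\Omega$ and the integrability of the $d$-term are minor but necessary technical points.
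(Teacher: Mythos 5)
Your proposal is correct and follows essentially the same route as the paper's proof: test with $\eta^2\hat{u}$, kill the $\eta^2$-part of the drift term by pointwise skew-symmetry, subtract the average $\overline{d}_{B_{3r/2}}$ and cancel the constant skew part by integration by parts (your null-Lagrangian argument is just a spelled-out version of the paper's ``two integrations by parts''), then apply H\"older with the exponents $\bigl(\tfrac{s}{2-s},\tfrac{s}{2(s-1)}\bigr)$ and John--Nirenberg to get the $r^{\frac{2n}{s'}-2}[[d]]_{\textup{BMO}}^2$ weight, with Young's inequality absorbing the gradient terms. Your extra remarks on admissibility of the test function and integrability of the $d$-term are at the same level of rigor as (indeed slightly more careful than) the paper's treatment.
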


\begin{proof} 
Without loss of generality, we assume that $x_0 =0$. We also write $\Omega_r = \Omega_r(0)$ and $B_r = B_r(0)$. Let $\varphi \in C_0^\infty(B_{3r/2}, [0,1])$ be a cut-off function with 
$\varphi =1$ on $B_r$ and $|\nabla \varphi| \leq \frac{2}{r}$. Note that if $B_{3r/2} \cap (\mathbb{R}^n \setminus \Omega) \not= \emptyset$, then $(u)_{B_{3r/2}} = 0$, and therefore
\[
\hat{u} = u = 0 \quad \text{on} \quad B_{3r/2} \cap \partial \Omega.
\]
By using $\hat{u}\varphi^2$ as a test function, we obtain
\[
\begin{split}
\int_{\Omega_{3r/2}} \wei{a \nabla\hat{u}, \nabla \hat{u} }\varphi^2 dx & = 
-2\int_{\Omega_{3r/2}} \wei{a \nabla \hat{u}, \nabla \varphi} \varphi \hat{u} dx - \int_{\Omega_{3r/2}} \wei{d \nabla \hat{u}, \nabla \hat{u} }\varphi^2 dx \\
&  \quad -  \int_{\Omega_{3r/2}} \wei{d \nabla \hat{u}, \nabla (\varphi^2)} \hat{u} dx 
+ \int_{\Omega_{3r/2}} \F(x) \cdot [ \varphi^2 \nabla \hat{u} + 2 \varphi \hat{u} \nabla \varphi] dx.
\end{split}
\]
Since $d = - d^*$, we have
\[
\wei{d\nabla \hat{u}, \nabla \hat{u}} =0, \quad \int_{\Omega_{3r/2}} \wei{\overline{d}_{B_{2r}(0)} \nabla \hat{u}, \nabla (\varphi^2)} \hat{u} dx =0 \,,
\]
where $\overline{d}_{B_{2r}(0)}$ is the average of $d$ defined in \eqref{dav}. Note that the first equality is direct consequence of skew-symmetry and the other follows after two integrations by parts. 
From this, \eqref{ellipticity-cond}, and Young's inequality, we obtain
\[
\begin{split}
\Lambda \int_{\Omega_{3r/2}} |\nabla \hat{u}|^2\varphi^2 dx & \leq 
2\Lambda^{-1}\int_{\Omega_{3r/2}} |\nabla \hat{u}| |\nabla \varphi| \varphi |\hat{u}| dx 
 + 2 \int_{\Omega_{3r/2}} |d - \bar{d}_{B_{3r/2}}| | \nabla \hat{u}|  |\nabla \varphi| \varphi |\hat{u}| dx \\
 & \quad \quad
+ \int_{\Omega_{3r/2}}|\F|[ \varphi^2 |\nabla \hat{u}| + 2|\nabla \varphi| |\hat{u}|\varphi] dx  \\
& \leq  \frac{\Lambda}{2} \int_{\Omega_{3r/2}} |\nabla \hat{u}|^2 \varphi^2 dx 
+ C(\Lambda) \int_{\Omega_{3r/2}} \Big[ |\F|^2\varphi^2 + |\nabla \varphi|^2 \hat{u}^2 \Big ]dx   \\
& \quad + C(\Lambda) \int_{\Omega_{3r/2}} |d - \bar{d}_{B_{3r/2}}|^2  |\nabla \varphi|^2  |\hat{u}|^2 dx.\end{split}
\]
Therefore,
\begin{equation} \label{energy-T}
\int_{\Omega_{3r/2}} |\nabla \hat{u}|^2\varphi^2 dx \leq C(\Lambda) \left[\frac{1}{r^2}\int_{\Omega_{3r/2}}\Big[ |d - \bar{d}_{B_{3r/2}}|^2 +1\Big] |\hat{u}|^2 dx +\int_{\Omega_{3r/2}} |\F|^2 dx   \right].
\end{equation}
We now control the first term on the right hand side of \eqref{energy-T}. Recall that $\Omega$ is a Lipschitz domain, there exists a constant $A$ such that
\[
|\Omega_\rho(x)| \geq A |B_{\rho}(x)|, \quad \forall \rho >0, \quad \forall \ x \ \in \overline{\Omega}.
\]
Then, for $s \in (1,2)$, and with $s' = s/(s-1)$, H\"{o}lder's inequality yields
\[
\begin{split}
\int_{\Omega_{3r/2}} |d - \bar{d}_{B_{2r}}|^2 |\hat{u}|^2 dx  & \leq C(n) r^{\frac{2n}{s'}}
\left\{ \fint_{B_{3r/2}} |d(x) -\bar{d}_{B_{2r}}|^{s'} dx \right\}^{\frac{2}{s'}} \left\{ \int_{\Omega_{3r/2}} |\hat{u}|^{\frac{2s}{2-s}} dx \right\}^{\frac{2-s}{s}} \\
& \leq C(n) r^{\frac{2n}{s'}} [[d]]^2_{\text{BMO}} \left\{ \int_{\Omega_{3r/2}} |\hat{u}|^{\frac{s}{2-s}} dx \right\}^{\frac{2-s}{s}}.
\end{split}
\]
Moreover, since
\[
\int_{\Omega_{3r/2}} |\hat{u}|^2 dx \leq C(n) r^{\frac{2n}{s'}}\left\{ \int_{\Omega_{3r/2}} |\hat{u}|^{\frac{2s}{2-s}} dx \right\}^{\frac{2-s}{s}},
\]
it follows from the last two estimates and \eqref{energy-T} that
\[
\int_{\Omega_{3r/2}} |\nabla u|^2\varphi^2 dx \leq C(\Lambda,n ,s) \left[ r^{\frac{2n}{s'} -2}\Big[ [[d]]^2_{\text{BMO}} +1 \Big] \left\{ \int_{\Omega_{3r/2}} |\hat{u}|^{\frac{2s}{2-s}} dx \right\}^{\frac{2-s}{s}} +\int_{\Omega_{3r/2}} |\F|^2\varphi^2 dx   \right] \,,
\]
and the proof is complete.
\end{proof}

\begin{lemma}[Reverse H\"{o}lder's inequality] \label{RH} For $ \mu \in \Big (\frac{2n}{n+2},2 \Big)$, there exists a constant $C = C(\Lambda, n, \mu)$ such that for every $x_0 \in \overline{\Omega}$ and every $r >0$, we have 
\[
 \int_{\Omega_{r}(x_0)} |\nabla \hat{u}|^2 dx \leq C 
\left\{r^{n\big(1 - \frac{2}{\mu}\big)} \Big[ [[d]]^2_{\textup{BMO}} +1 \Big] \left ( \int_{\Omega_{2r}(x_0)} |\nabla \hat{u}|^{\mu} \right )^{2/\mu} + \int_{\Omega_{2r}(x_0)} |\F(x)|^2 dx  \right\}.
\]
\end{lemma}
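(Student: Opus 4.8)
The plan is to combine the Caccioppoli-type inequality of Lemma~\ref{Caccioppoli} with the Poincar\'e--Sobolev inequality of Lemma~\ref{Poincare} to convert the $L^{\frac{2s}{2-s}}$-norm of $\hat u$ on the right-hand side of Lemma~\ref{Caccioppoli} into an $L^\mu$-norm of $\nabla u$. The key observation is that since $\nabla \hat u = \nabla u$ and $\hat u = u - (u)_{B_r(x_0)}$ with $(u)_{B_r(x_0)}$ chosen exactly as the average appearing in Lemma~\ref{Poincare}, that lemma applies directly to $u$ with the exponents there taken to be $\lambda = \frac{2s}{2-s}$ and $\mu$ the given exponent in $\big(\frac{2n}{n+2},2\big)$.

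The first step is to choose $s\in(1,2)$ as a function of the given $\mu$ so that the balance condition $n\big(\frac{1}{\lambda}-\frac{1}{\mu}\big)+1\ge 0$ of Lemma~\ref{Poincare} holds with $\lambda = \frac{2s}{2-s}$. A short computation: as $s\to 1^+$ one has $\lambda\to 2$, and the condition $n\big(\tfrac12-\tfrac1\mu\big)+1\ge 0$ is exactly $\mu\ge \frac{2n}{n+2}$, which is our hypothesis (with strict inequality, giving room to pick $s>1$). So one can fix such an $s=s(\mu,n)$; this also fixes $s'=\frac{s}{s-1}$ and hence the exponent $r^{\frac{2n}{s'}-2}$ appearing in Lemma~\ref{Caccioppoli}. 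The second step is to apply Lemma~\ref{Poincare} on $\Omega_{3r/2}(x_0)$ (taking radius $3r/2$ there, so that $4/3\cdot 3r/2 = 2r$ and the gradient integral lands on $\Omega_{2r}(x_0)$), obtaining
\[
\left(\int_{\Omega_{3r/2}(x_0)}|\hat u|^{\frac{2s}{2-s}}\,dx\right)^{\frac{2-s}{s}}
\le C(n)\, r^{2n\big(\frac{2-s}{2s}-\frac{1}{\mu}\big)+2}\left(\int_{\Omega_{2r}(x_0)}|\nabla u|^\mu\,dx\right)^{2/\mu}.
\]
The third step is to substitute this into Lemma~\ref{Caccioppoli}, bound $\Omega_r(x_0)\subset\Omega_{3r/2}(x_0)$, and check that the powers of $r$ combine correctly: the prefactor becomes $r^{\frac{2n}{s'}-2}\cdot r^{2n\big(\frac{2-s}{2s}-\frac{1}{\mu}\big)+2}$, and using $\frac{1}{s'}=1-\frac1s=\frac{s-1}{s}$ together with $\frac{2-s}{2s}=\frac{1}{s}-\frac12$, the exponent collapses to $n\big(1-\frac{2}{\mu}\big)$, which is precisely the exponent claimed in the statement. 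This bookkeeping with the exponents is the only place any care is needed; everything else is a direct citation.

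The main (minor) obstacle is thus purely arithmetic: verifying that the choice of $s$ can be made consistently with the strict inequality $\mu>\frac{2n}{n+2}$ and that the resulting power of $r$ is exactly $n(1-2/\mu)$ — in particular one should note that there is an open interval of admissible $s$, so the constant $C$ depends only on $\Lambda,n,\mu$ as asserted. A secondary point to keep in mind is the boundary case: when $B_r(x_0)$ meets $\mathbb{R}^n\setminus\Omega$, the average $(u)_{B_r(x_0)}$ is $0$ by the convention \eqref{average}, so $\hat u = u\in W^{1,2}_0$ near that part of the boundary and Lemma~\ref{Poincare} still applies as stated; there is no separate argument needed for interior versus boundary balls, which is why both lemmas were phrased for $x_0\in\overline\Omega$ from the start.
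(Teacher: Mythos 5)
Your proposal is correct and follows essentially the same route as the paper: fix $s=s(\mu,n)\in(1,2)$ so that $\lambda=\tfrac{2s}{2-s}$ satisfies the balance condition of Lemma \ref{Poincare} (possible precisely because $\mu>\tfrac{2n}{n+2}$ is strict), apply the Poincar\'e--Sobolev inequality at radius $3r/2$ to land the gradient on $\Omega_{2r}(x_0)$, and substitute into Lemma \ref{Caccioppoli}, with the exponent arithmetic collapsing to $n\big(1-\tfrac{2}{\mu}\big)$ exactly as you computed. No substantive difference from the paper's proof.
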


\begin{proof}
 Since $\mu \in \Big (\frac{2n}{n+2},2 \Big)$, we can choose $s \in (1,2)$ such that  $\lambda = \frac{2s}{2-s}$ satisfies
\[
n \left( \frac{1}{\lambda} - \frac{1}{\mu} \right)+1 \geq 0.
\]
 Then, it follows from Poincar\'e - Sobolev's inequality, Lemma \ref{Poincare}, that
\[
\left( \int_{\Omega_{3r/2}} |\hat{u}|^{\lambda} dx \right)^{1/\lambda} \leq C(n, \mu) r^{n \Big(\frac{1}{\lambda} - \frac{1}{\mu} \Big) +1} 
\left (\int_{\Omega_{2r}} |\nabla \hat{u}|^{\mu} \right )^{1/\mu}.
\]
Therefore, Lemma \ref{Caccioppoli} implies
\[
\begin{split}
\int_{\Omega_{r}} |\nabla \hat{u}|^2 dx  & \leq C(\Lambda,n ,s) \left\{ r^{2n\big( \frac{1}{s'} + \frac{1}{\lambda} - \frac{1}{\mu}\big)}\Big[ [[d]]^2_{\text{BMO}} +1 \Big] \left (\int_{\Omega_{2r}} |\nabla \hat{u}|^{\mu} \right )^{2/\mu} +\int_{\Omega_{2r}} |\F(x)|^2 dx   \right\} \\
&= C(\Lambda,n ,s) \left\{ r^{n\big(1 - \frac{2}{\mu}\big)}\Big[ [[d]]^2_{\text{BMO}} +1 \Big] \left (\int_{\Omega_{2r}} |\nabla \hat{u}|^{\mu} \right )^{2/\mu} +\int_{\Omega_{2r}} |\F(x)|^2 dx   \right\}.
\end{split}
\]
The proof is then complete.
\end{proof}
\begin{proof}[Proof of Theorem \ref{self-improvement}] We start with proving the existence and uniqueness of the weak solution of \eqref{main-eqn}. To this end, we define the bilinear form
\[
B(u, v) = \int_{\Omega}\wei{\A(x) \nabla u, \nabla v} dx, \quad u, v \in W^{1,2}_0(\Omega),
\]
and prove that it is coercive and bounded. First of all, note that by the ellipticity condition \eqref{ellipticity-cond} and the fact that $d$ is skew-symmetric, we see that
\[
B(u,u) \geq \Lambda \norm{\nabla u}_{L^2(\Omega)}^2, \quad \forall \ u \in W^{1,2}_0(\Omega).
\]
Hence, $B$ is coercive. On the other hand, since $d= (d_{ij})_{n \times n}$ skew-symmetric, and it is in $\textup{BMO}$, it follows from \cite{CLMS} and \cite{FS} (see also \cite{MV}) that
\[
\left| \int_{\Omega} \wei{d \nabla u, \nabla v} dx \right| \leq C(n, [[d]]_{\textup{BMO}}) \norm{\nabla u}_{L^2(\Omega)} \norm{\nabla u}_{L^2(\Omega)},  \quad \forall \ u, v \in W^{1,2}_0(\Omega).
\]
This together with \eqref{ellipticity-cond} imply that
\[
|B(u,v)| \leq C(n, \Lambda, [[d]]_{\textup{BMO}}) \norm{\nabla u}_{L^2(\Omega)} \norm{\nabla u}_{L^2(\Omega)},  \quad \forall \ u, v \in W^{1,2}_0(\Omega).
\]
This gives the boundedness of $B$. Therefore, the existence, uniqueness of weak solution of \eqref{main-eqn}, and the estimate \eqref{energy-thm}  follow from the Lax-Milgram theorem.

It now remains to prove \eqref{reverse-Holder}. Let us cover $\Omega$ with 
a finite number of balls
\[
\Omega \subset \left[ \bigcup_{k=1}^{m_0} B_{\rho_k}(x_k) \right] \cup \left[ \bigcup_{k=1}^{l_0} B_{\mu_k}(y_k) \right], 
\]
where $\rho_k>0$, $x_k \in \partial \Omega$ for all $k =1, 2, \cdots m_0$ and 
\[
\mu_l >0, \quad B_{3\mu_l}(y_l) \subset \Omega, \quad \forall \ l = 1, 2, \cdots, l_0.
\]
Now, for each $k = 1, 2, \cdots, l_0$, it follows from Lemma \ref{RH} that
\begin{equation} \label{inter-improve}
\fint_{B_{r}} |\nabla \hat{u}|^2 dx \leq C 
\left\{\Big[ [[d]]^2_{\textup{BMO}} +1 \Big] \left ( \fint_{B_{2r}} |\nabla \hat{u}|^{\mu} \right )^{2/\mu} + \fint_{B_{2r}} |\F(x)|^2 dx  \right\}, 
\end{equation}
for all $B_{r} = B_r(x) \subset B_{\mu_k}(y_k)$.  Note that for any $R >0$ and any $z_0 \in \mathbb{R}^n$, the following inclusions are obvious
\[
B_R(z_0) \subset Q_{R}(z_0) \subset B_{R\sqrt{n}}(z_0).
\]
Therefore, we can rewrite \eqref{inter-improve} with cubes on the left- and right-hand sides with fixed ration of lengths of edges. Thus, we can apply Lemma \ref{Gehring} to find $\epsilon_k >0 $ such that with $p \in [2, 2 + \epsilon_k]$,
\begin{equation} \label{interior-im}
\left( \fint_{B_{\rho_k}(y_k)} |\nabla \hat{u}|^p dx  \right)^{1/p} \leq C 
\left\{ \left ( \fint_{B_{2\rho_k}(y_k)} |\nabla \hat{u}|^{2} \right )^{1/2} + \left(\fint_{B_{2\rho_k}(y_k)} |\F(x)|^p dx \right)^{1/p}  \right\}.
\end{equation}
Now, for $k  = 1, 2, \cdots, m_0$, we consider the ball $B_{\rho_k}(x_k)$. Note that since $x_k \in \partial \Omega$ and $\Omega$ is Lipschitz, we have
\begin{equation} \label{Lip-boundary}
\Big | B_{r}(x_k) \cap (\mathbb{R}^n \setminus \Omega) \Big| \geq c r^n.
\end{equation}
Note also $(u)_{B_{3r/2}(x_k)} = 0$, and therefore $u = \hat{u}$. By extending $\F, \hat{u}$ to be zero outside of $\Omega$, we can see that $u \in H^1_0(\mathbb{R}^n)$. Moreover, from Lemma \ref{RH} and \eqref{Lip-boundary}, we also obtain
\[
\fint_{B_{r}} |\nabla \hat{u}|^2 dx \leq C 
\left\{\Big[ [[d]]^2_{\textup{BMO}} +1 \Big] \left ( \fint_{B_{2r}} |\nabla \hat{u}|^{\mu} \right )^{2/\mu} + \fint_{B_{2r}} |\F(x)|^2 dx  \right\}, 
\]
for all $B_{r} = B_r(x) \subset B_{\rho_k}(x_k)$. As before, we can apply the Lemma \ref{Gehring} again. Hence, there exists $\delta_k >0$ such that for all $p \in [2, 2 + \delta_k]$, the following estimate holds 
\[
\left( \fint_{B_{\mu_k}(x_k)} |\nabla \hat{u}|^p dx  \right)^{1/p} \leq C 
\left\{ \left ( \fint_{B_{2\mu_k}(x_k)} |\nabla \hat{u}|^{2} \right )^{1/2} + \left(\fint_{B_{2\mu_k}(x_k)} |\F(x)|^p dx \right)^{1/p}  \right\}.
\]
This implies
\begin{equation} \label{boundary-im}
\left( \fint_{\Omega_{\rho_k}(x_k)} |\nabla \hat{u}|^p dx  \right)^{1/p} \leq C 
\left\{ \left ( \fint_{\Omega_{2\rho_k}(x_k)} |\nabla \hat{u}|^{2} \right )^{1/2} + \left(\fint_{\Omega_{2\rho_k}(x_k)} |\F(x)|^p dx \right)^{1/p}  \right\}.
\end{equation}
By taking $\epsilon = \min \{\delta_k, \epsilon_l, k = 1,2,\cdots, m_0, l = 1, 2,\cdots, l_0\}$, we see that Theorem \ref{self-improvement} follows from the estimates \eqref{energy-thm}, \eqref{interior-im}, and \eqref{boundary-im}. The proof is therefore complete.
\end{proof}
\section{Proof of Theorem \ref{exam-thrm}} \label{example-section}

This section provides an example showing that ellipticity condition and boundedness of $\A$ is not sufficient for \eqref{reverse-Holder} for large $p > 2$, even when the symmetric part of $\A$ is the identity matrix.
Our construction is partly motivated by a famous work \cite{M}, in which the symmetric part of $\A$ is bounded (but not continuous)
 and  the skew-symmetric part is identically zero.  In our example, the symmetric part of $\A$ is the identity matrix, but  the skew-symmetric part is  
 not continuous.

For any fixed $\mu \in \mathbb{R} \setminus \{0\}$, we define
\begin{equation}
D(x, y) = \left(
\begin{array}{cc}
0 & d(x, y) \\
-d(x, y) & 0
\end{array}
\right)  \qquad 
\textrm{with } \quad d(x, y) = C_\mu
\begin{cases}
 \arctan \left( \frac{y}{x} \right) & x \neq 0, \, \\
  \frac{\pi}{2} \sgn(y) & x = 0 \,,
 \end{cases}
\end{equation}
where $C_\mu = \frac{ \mu^2 - 1}{\mu} \neq 0$ for $\mu \neq \pm 1$.  Moreover, let
\[
\A(x) = \mathbf{I}_{2} + D(x) = \left(
\begin{array}{cc}
1 & d(x, y) \\
-d(x, y) & 1
\end{array}
\right) .
\]
Note that $D$ is skew-symmetric and the matrix $\A$ is uniformly elliptic as   
\begin{equation}
\wei{\A(x, y) \xi, \xi}  = |\xi|^2, \quad \forall \ \xi \in \mathbb{R}^2, \quad \text{for all a.e.} (x,y) \in \R^2.
\end{equation}
Furthermore, since arc-tangent is bounded smooth function, the matrix $\A$ has bounded coefficients which are smooth away from the $y$-axis: $Y = \{(0, y) : y \in \mathbb{R} \}$.
 Note also that $d$ cannot be extended as a continuous function to $Y$ for $\mu \neq \pm 1$, that is, $C_\mu \neq 0$  since 
\begin{equation}
  \lim_{x \to 0^+} d(x, y) =    \frac{C_\mu \pi}{2} \sgn(y) \neq -  \frac{C_\mu \pi}{2} \sgn(y) =  \lim_{x \to 0^-} d(x, y)\,.
\end{equation}
We also remark that $d$ is bounded with $\norm{d}_{L^\infty(\R^2)} = \frac{\pi|1-\mu^2|}{2\mu}$, and thus $d$ is a \textup{BMO}-function.

Let $B_1$ be the unit ball in $\mathbb{R}^2$ centered at the origin. We consider the following elliptic equation
\begin{equation}
Lu: = \textrm{div}(\A(x,y) \nabla u) =  0 \quad \text{in} \quad B_1.
\end{equation}
We show that for each $\mu \in (0,1)$  the function $u(x, y) = x (x^2 + y^2)^{\frac{\mu - 1}{2}}$ is a weak solution of $L u = 0$.  
First note that $u$ is smooth in $\mathbb{R}\setminus \{0\}$ and $d$ is smooth on $\mathbb{R}^2 \setminus Y$. Then on $\mathbb{R}^2 \setminus Y$
\begin{align}
 L u = \Delta u + (d u_y )_x - (du_x)_y = \Delta u + d_x u_y  -  d_y u_x \,.
\end{align}
Using the polar coordinates we have
\begin{align*}
Lu &= u_{rr} + \frac{1}{r} u_r + \frac{1}{r^2} u_{\theta \theta} + \left(d_r \cos \theta - d_{\theta} \frac{\sin \theta}{r} \right) \left(u_r \sin \theta + u_{\theta} \frac{\cos \theta}{r} \right) \\
&\qquad - 
\left(u_r \cos \theta - u_{\theta} \frac{\sin \theta}{r} \right) \left(d_r \sin \theta + d_{\theta} \frac{\cos \theta}{r} \right) \\
&=  u_{rr} + \frac{1}{r} u_r + \frac{1}{r^2} u_{\theta \theta} + \frac{ d_r u_\theta - d_\theta u_r}{r}\,.
\end{align*}
Moreover,  $u = r^\mu \cos \theta$ for any $r$ and $\theta$.   Also for $r > 0$ and $\theta \in (-\pi/2, \pi/2)$
\begin{equation*}
d = C_\mu \arctan \left(\tan \theta\right) =  C_\mu \theta   \,,
\end{equation*}
and for $\theta \in (\pi/2, 3\pi/2)$ 
\begin{equation*}
d = C_\mu \arctan \left(\tan \theta\right) =  C_\mu (\theta - \pi)  \,.
\end{equation*}
In particular, $d_r  = 0$ and $d_\theta = C_\mu$.  Hence, in $\mathbb{R}^2 \setminus Y$ the definition of $C_\mu$ yields
\begin{align} \label{aei}
\frac{1}{r^{\mu - 2}} Lu =  \mu (\mu - 1) \cos \theta + \mu \cos \theta - \cos \theta - C_\mu \mu \cos \theta = 0 \,.
\end{align}
Hence, we proved that $Lu = 0$ on $\R^2 \setminus Y$, where $Y = \{(0, y): y \in \R\}$ is the $y$-axis.  

Let us prove that for $\mu \in (0,1)$, $u$ is a weak solution of $Lu = 0$. Observe that for such $\mu$ one has $|\nabla u| \in L^2(B_1)$. Let $\varphi$ be any smooth function compactly supported in $B_1$. We claim that 
\begin{equation} \label{sol-weak}
\int_{B_1} \wei{\A(x, y) \nabla u, \nabla \varphi} dxdy = 0 \,.
\end{equation} 
Let $\epsilon \in (0,1)$, and we write
\begin{equation} \label{epsilon-ball}
\int_{B_1} \wei{\A(x, y) \nabla u, \nabla \varphi} dxdy = \int_{B_\epsilon}\wei{\A(x, y) \nabla u, \nabla \varphi} dxdy + \int_{B_1\setminus B_\epsilon}\wei{\A(x, y) \nabla u, \nabla \varphi} dxdy 
\end{equation}
We now control the first term on the right hand side of \eqref{epsilon-ball}. As $|\nabla u(x,y)| \leq  C(\mu) r^{\mu-1}$, we infer that
\begin{equation} \label{term-1.est}
\begin{split}
\left|  \int_{B_\epsilon}\wei{\A(x, y) \nabla u, \nabla \varphi} dxdy \right| & \leq \norm{\varphi}_{L^\infty(B_1)} \norm{\A}_{L^\infty(B_1)} \int_{B_\epsilon} |\nabla u(x,y)| dxdy \\
& \leq C \int_0^\epsilon r^{\mu} dr \leq  C \epsilon^{\mu+1} \rightarrow 0 \quad \text{as} \quad \epsilon \rightarrow 0^+.
\end{split}
\end{equation}
To deal with the second term on the right hand side of \eqref{epsilon-ball}, let us denote 
\[ 
\begin{split}
& E^+ = \{(x, y) : x > 0\} \cap (B_1\setminus B_\epsilon), \quad  E^- = \{(x, y) : x < 0\} \cap (B_1\setminus B_\epsilon), \\
& Y_\epsilon = Y \cap\{(x,y) \in \mathbb{R}^2:\epsilon < |y| < 1\},
\end{split}
\] 
and we write
\[
\int_{B_1\setminus B_\epsilon} \wei{\A(x, y) \nabla u, \nabla \varphi} dxdy
= \int_{E^+} \wei{\A(x, y) \nabla u, \nabla \varphi} dxdy
+ \int_{E^-} \wei{\A(x, y) \nabla u, \nabla \varphi} dxdy.
\]
Then, by using the integration by parts on each of $E^+$ and $E^-$, and using $Lu = 0$ on $\R^2 \setminus Y$, we obtain
\begin{align}
& \int_{B_1 \setminus B_\epsilon} \wei{\A(x, y) \nabla u, \nabla \varphi} dxdy \nonumber \\
&=  \int_{Y_\epsilon} \wei{\A(0^+, y) \nabla u(0^+, y), -e_1} \varphi(0, y) dy 
+ \int_{Y_\epsilon} \wei{\A(0^-, y) \nabla u(0^-, y), e_1 }\varphi(0,y) dy \nonumber \\
&\quad \quad- \int_{\partial B_\epsilon} \wei{\A \nabla u, \vec{\nu}} \varphi(x,y) dS - \int_{E^+} Lu(x,y) \varphi(x,y) dxdy 
- \int_{E^-} Lu(x,y) \varphi(x,y) dxdy  \nonumber\\
&= -\int_{Y_\epsilon}[ u_x(0^+, y) + d(0^+, y)u_y(0^+, y)] \varphi (0,y) dy  \nonumber \\ \label{first-cal-sol}
& \quad \quad
+ \int_{Y_\epsilon}  [u_x(0^-, y) + d(0^-, y)u_y(0^-, y) ] \varphi (0, y) dy - \int_{\partial B_\epsilon} \wei{\A \nabla u, \vec{\nu}} \varphi(x,y) dS
\,,
\end{align}
where $e_1 = (1, 0)^T$, $\vec{\nu}$ is the normal outward vector on the circle $\partial B_\epsilon$, and for any function $f$ we denote $f(0^\pm, y) = \lim_{x \to 0^\pm} f(x, y)$. Next, since $u_x$ is a continuous function on 
$\R^2 \setminus \{0\}$, we obtain 
\begin{equation} \label{compatibility-1}
-\int_{Y_\epsilon} u_x(0^+, y) \varphi(0, y) dy 
+ \int_{Y_\epsilon}  u_x(0^-, y)  \varphi(0, y)dy = 0 \,.
\end{equation}
Moreover, since $u_y(0^+, y) = u_y(0^-, y) =0$ for each $y \in Y_\epsilon$ and $d$ is bounded, it follows that  
\begin{equation} \label{compatibility-2}
-\int_{Y_\epsilon}  d(0^+, y)u_y(0^+, y) \varphi(0, y) dy 
+ \int_{Y_\epsilon}  d(0^-, y)u_y(0^-, y)  \varphi(0, y) dy = 0.
\end{equation}
Finally,  $|\nabla u| \leq C r^{\mu-1}$, and boundedness of $\A$ imply
\begin{equation} \label{sphere-ep}
\begin{split}
\left| \int_{\partial B_\epsilon} \wei{\A \nabla u, \vec{\nu}} \varphi(x,y) dS \right|  &\leq \norm{\A}_{L^\infty(B_1)} \norm{\nabla \varphi}_{L^\infty(B_1)}\epsilon^{\mu-1} \int_{\partial B_\epsilon} dS\\
& \leq C \epsilon^{\mu} \rightarrow 0, \quad \text{as} \quad \epsilon \rightarrow 0^+.
\end{split} 
\end{equation}
Hence, by collecting \eqref{epsilon-ball}, \eqref{term-1.est}, \eqref{first-cal-sol}, \eqref{compatibility-1}, \eqref{compatibility-2}, and \eqref{sphere-ep}, we see that \eqref{sol-weak} follows as desired.

Finally, it can be easily inferred that $|\nabla u| \in L^p(B_1)$   if and only if 	$p < \frac{2}{1 - \mu}$
and $u \in C^\alpha(B_1)$ if and only if $\alpha \leq \mu$.  Since $\mu \in (0, 1)$ was arbitrary, the desired conclusion follows.

 \noindent
\textbf{Acknowledgement.}  T. Phan's research is supported by the Simons Foundation, grant \#~354889. The authors would like to thank Hongjie Dong for finding 
 a gap in the proof of Theorem \ref{exam-thrm} and providing helpful references. 

\end{document}